\newtheorem{theorem}{Theorem}
\theoremstyle{remark}
\newtheorem*{remark}{Remark}
\theoremstyle{plain}
\newtheorem*{acknowledgement}{Acknowledgement}
\newtheorem{corollary}[theorem]{Corollary}
\newtheorem{lemma}[theorem]{Lemma}
\newtheorem{proposition}[theorem]{Proposition}
\numberwithin{equation}{section}
\begin{document}
\title[Explicit excision]{Explicit Wodzicki excision in cyclic homology}
\author{O. Braunling}
\address{Fakult\"{a}t f\"{u}r Mathematik, Universit\"{a}t Duisburg-Essen,
Thea-Leymann-Strasse 9, 45127 Essen, Germany}
\email{oliver.braeunling@uni-due.de}
\thanks{This work has been partially supported by the DFG SFB/TR45 \textquotedblleft%
{Periods, moduli spaces, and arithmetic of algebraic varieties}%
\textquotedblright\ and the Alexander von Humboldt Stiftung.}
\date{{\today}}
\subjclass[2010]{Primary 19D55; Secondary 16E40}
\keywords{Wodzicki excision, cyclic homology, local units}

\begin{abstract}
Assuming local one-sided units exist, I give an elementary proof of Wodzicki
excision for cyclic homology. The proof is also constructive and provides an
explicit inverse excision map. As far as I know, the latter is new.

\end{abstract}
\maketitle

We work over a field of characteristic zero. For every algebra extension
$I\hookrightarrow A\twoheadrightarrow A/I$, where $I$ is a two-sided ideal,
there is a long exact sequence in cyclic homology%
\begin{equation}
\cdots\rightarrow HC_{n}(A,I)\rightarrow HC_{n}(A)\rightarrow HC_{n}%
(A/I)\rightarrow HC_{n-1}(A,I)\rightarrow\cdots\text{,} \label{lmj1}%
\end{equation}
where $HC_{n}(A,I)$ is relative cyclic homology. In general this group really
depends on $A$, but in favourable situations it agrees with the group
$HC_{n}(I)$. M. Wodzicki proved that this happens iff the bar complex of $I$
is acyclic \cite[Thm. 3]{MR934604}, \cite[\S 3]{MR997314}, $I$ is then called
$H$\emph{-unital}. In this case the straightforward \emph{excision map}, i.e.%
\begin{equation}
\rho:HC_{n}(I)\longrightarrow HC_{n}(A,I)\text{,} \label{lpj20}%
\end{equation}
is an isomorphism. In general it does not seem realistic to hope for a closed
formula for its inverse, already by the abstract nature of $H$-unitality.
Chances should get better if the bar complex comes with an explicit
contracting homotopy. A prominent such case is the following: An algebra $I$
\emph{has local left units} if for every finite set $S\subseteq I $ there
exists an element $e$ such that $\forall s\in S:es=s$. Wodzicki shows that
such $I$ are $H$-unital \cite[Prop. 2]{MR934604}, \cite[Cor. 4.5]{MR997314}.

In the present paper we want to give an elementary proof for this special case
of Wodzicki's theorem:

\begin{theorem}
\label{ANNOUNCER_ThmExcision}Suppose $I\hookrightarrow A\twoheadrightarrow
A/I$ is an algebra extension such that $I$ and $A$ have local left (or right)
units. Then the excision map $\rho:HC_{n}(I)\rightarrow HC_{n}(A,I)$ is an isomorphism.
\end{theorem}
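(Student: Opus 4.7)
The plan is to build an explicit chain-level inverse $\sigma$ to the excision map $\rho$, working on the Connes complex $C_{\ast}^{\lambda}(A,I):=\ker\bigl(C_{\ast}^{\lambda}(A)\twoheadrightarrow C_{\ast}^{\lambda}(A/I)\bigr)$, whose classes are represented by tensors $a_{0}\otimes\cdots\otimes a_{n}$ with at least one factor in $I$. The inclusion $C_{\ast}^{\lambda}(I)\hookrightarrow C_{\ast}^{\lambda}(A,I)$ induces $\rho$, and I want $\sigma$ to go the other way. Once constructed at the chain level, passage from the Hochschild level to cyclic homology is automatic via the $SBI$ long exact sequence.

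The construction would proceed by induction on the number of $A\setminus I$-factors. Introduce the descending filtration $F^{p}\subseteq C_{\ast}^{\lambda}(A,I)$ by classes representable using at least $p$ factors in $I$; in degree $n$ one has $F^{n+1}=\mathrm{image}(\rho)$. The key claim is that each inclusion $F^{p+1}\hookrightarrow F^{p}$ is a quasi-isomorphism with an explicit retraction. Given $\xi\in F^{p}$, pick a local left unit $e\in I$ acting as identity on every $I$-factor appearing in $\xi$ (finitely many, by hypothesis). After cyclic rotation, position an $I$-factor $x$ next to an $A$-factor $a$; since $I$ is a two-sided ideal, $ae\in I$, and the Hochschild boundary
\begin{equation*}
b(\cdots\otimes a\otimes e\otimes x\otimes\cdots)=\cdots\otimes ae\otimes x\otimes\cdots \;-\;\cdots\otimes a\otimes x\otimes\cdots \;+\; R,
\end{equation*}
with $R$ collecting terms already in $F^{p+1}$, exhibits $\xi$ as an element of $F^{p+1}$ plus an explicit boundary. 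Iterating at most $n+1$ times produces $\sigma(\xi)\in C_{\ast}^{\lambda}(I)$.

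Verification of $\sigma\rho=\mathrm{id}$ is immediate, since the procedure is the identity on tensors already in $I^{\otimes n+1}$. Verification of $\rho\sigma\sim\mathrm{id}$ is built into the construction: the cumulative Hochschild boundaries record the homotopy explicitly. The main obstacle is coherence: the procedure depends on choices of local unit $e$ and of cyclic rotation, and one must verify that different choices yield chain-homotopic outputs. I would fix a canonical rule---always promote the leftmost $I$-factor and use a distinguished local unit---and prove independence by showing that for two local units $e_{1},e_{2}\in I$ both acting as left identity on the same finite set, the resulting chains differ by an explicit boundary; this is essentially a cyclic-complex repackaging of Wodzicki's proof that algebras with local units are $H$-unital \cite[Prop.~2]{MR934604}, \cite[Cor.~4.5]{MR997314}. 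Maintaining compatibility with the cyclic symmetry and ensuring $\sigma$ descends through $1-\tau$ is where the combinatorial bookkeeping sits.
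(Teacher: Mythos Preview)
Your overall strategy matches the paper's: filter by how many slots lie in $I$, insert a local unit $e$, and use a Hochschild boundary to promote an $A$-slot to an $I$-slot. The gap is in the filtration you chose and in the claim about the remainder $R$.

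The filtration by ``at least $p$ factors in $I$'' is \emph{not} a filtration by subcomplexes: whenever $b$ multiplies two adjacent $I$-slots the count drops by one. For instance $x\otimes y\otimes a\in F^{2}$ (with $x,y\in I$, $a\in A$) has $b(x\otimes y\otimes a)=xy\otimes a-x\otimes ya+ax\otimes y$, and $xy\otimes a$ carries only one $I$-factor. For the same reason your displayed identity fails: the terms of $b(\cdots\otimes a\otimes e\otimes x\otimes\cdots)$ that come from multiplying two $I$-slots away from the insertion point still contain $e$ but have lost an $I$-factor elsewhere, so they lie only in $F^{p}$, not $F^{p+1}$. Thus $R\notin F^{p+1}$ in general and the induction does not advance.

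The paper repairs this with two changes. First, it uses the \emph{positional} Guccione--Guccione filtration $F_{p}C_{n}(A)=\operatorname{span}\{f_{0}\otimes\cdots\otimes f_{n}:f_{0},\ldots,f_{n-p}\in I\}$, which \emph{is} stable under $b$. Second, it inserts $e$ at slot~$0$ rather than in the middle: with $G\varphi:=e\otimes\varphi$ one gets
\[
bG\varphi=\varphi-e\otimes b\varphi+(-1)^{n}\bigl(e\otimes f_{n}\varphi^{(1)}-f_{n}e\otimes\varphi^{(1)}\bigr),
\]
and now the \emph{cycle hypothesis} $b\varphi=0$ (which your computation never invokes) kills the middle term, while the two wraparound terms visibly lie in $F_{p-1}$. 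Passage to cyclic homology is handled by first rotating an $I$-slot into position~$0$ (the symmetrized filtration $\tilde F_{p}$), not via $SBI$; indeed the paper remarks that its method does \emph{not} yield Hochschild excision because $F_{n}C_{n}(A,I)\subsetneq C_{n}(A,I)$. A smaller point: since the local unit $e$ depends on the finite set of tensors at hand, $\sigma$ cannot be a global chain map; the paper is careful to claim only a formula on finite-dimensional subspaces of cycles, all computing the same $\rho^{-1}$ in homology.
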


The proof circumvents the use of spectral sequences, but on the downside is of
course not as general as the original result. In fact, the proof is
constructive and leads to an explicit inverse map $\rho^{-1}$.

\begin{theorem}
\label{ANNOUNCER_ThmExplicitFormula}We keep the assumptions as in Thm.
\ref{ANNOUNCER_ThmExcision}. Then for every finite-dimensional subspace of
$V\subseteq HC_{n}(A,I)$ there exists a finite-dimensional subspace
$V^{\prime}\subseteq I\otimes A^{\otimes n}$ and (non-canonical) elements
$e_{1},\dots,e_{n}\in I$ allowing to define a map $V^{\prime}\rightarrow
I^{\otimes n+1}$ sending $f_{0}\otimes f_{1}\otimes\cdots\otimes f_{n}$ to%
\begin{equation}
\sum_{s_{1}\ldots s_{n}\in\{\pm\}}\left(  -1\right)  ^{s_{1}+\cdots+s_{n}%
}\underset{s_{1}}{\underbrace{%
\begin{array}
[c]{c}%
e_{1}\otimes f_{1}\\
f_{1}e_{1}\otimes
\end{array}
}}\underset{s_{2}}{\underbrace{%
\begin{array}
[c]{c}%
e_{2}\otimes f_{2}\\
f_{2}e_{2}\otimes
\end{array}
}}\cdots\underset{s_{n}}{\underbrace{%
\begin{array}
[c]{c}%
e_{n}\otimes f_{n}\\
f_{n}e_{n}\otimes
\end{array}
}}f_{0} \label{lpwn2}%
\end{equation}
(where for each underbrace we take the upper term if $s_{i}=+$, the lower if
$s_{i}=-$) which in homology induces the inverse map $\rho^{-1}:HC_{n}%
(A,I)\supseteq V\rightarrow HC_{n}(I)$.
\end{theorem}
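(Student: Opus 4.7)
The plan is to verify formula (\ref{lpwn2}) directly by exhibiting it as a chain-level right inverse to $\rho$ on $V$. Since Theorem~\ref{ANNOUNCER_ThmExcision} already guarantees that $\rho$ is an isomorphism, it suffices to produce a map $\bar{\phi}\colon V \to HC_n(I)$ with $\rho \circ \bar{\phi} = \mathrm{id}_V$; uniqueness of inverses then forces $\bar{\phi} = \rho^{-1}|_V$. First, I would pick a finite spanning set of chain-level representatives of $V$ in $I \otimes A^{\otimes n}$; their tensor slots involve only finitely many elements of $A$. Using the local left units of $I$, I would construct $e_n, e_{n-1}, \ldots, e_1 \in I$ inductively from the inside out, each $e_i$ chosen to act as a left unit for the finite set of elements that will sit to its right in (\ref{lpwn2}) once $e_{i+1}, \ldots, e_n$ are fixed.

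With the units in hand, define $\phi\colon V' \to I^{\otimes n+1}$ by (\ref{lpwn2}) and verify two chain-level properties. For $\phi(z)$ to be a cycle in the cyclic complex of $I$, the essential input is that each slot contributes an antisymmetric pair (upper minus lower) behaving like a commutator $[e_i, f_i]$; when the Hochschild (respectively mixed) differential is expanded, its terms split into faces internal to a slot, which collapse via the local unit relation $e_i x = x$ for the various $x$ arising further in, and faces gluing adjacent slots, which cancel pairwise between neighbouring sign patterns. I would organise this by induction on $n$, peeling off the outermost slot and invoking the $n-1$ case for the interior. For the compatibility $\rho\phi(z) \equiv z$ in $HC_n(A, I)$, I would exhibit an explicit bounding chain built from the same $e_i$'s, playing the role of a slot-by-slot version of Wodzicki's contracting homotopy on the bar complex, reducing the argument at each step to a Hochschild-level relation between $e \otimes f$ and $fe$.

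The main obstacle I anticipate is the combinatorial bookkeeping. The signed sum has $2^n$ terms, and each application of $b$ produces on the order of $n$ further summands, so the naive expansion grows rapidly. A clean argument will likely require either reorganising the sum as an iterated tensor of commutators $[e_i, f_i]$ — which crucially lie in $I$ regardless of whether the $f_i$ do — and computing $b$ via a Leibniz-type identity, or else packaging the induction so that the outer slot contributes a controlled boundary which the inner step absorbs. In either route, the reverse choice of units is essential: it is precisely what ensures that any spurious product $e_i \cdot x$, for $x$ an element arising further inside the formula, collapses back to $x$, which is the mechanism driving the desired telescope.
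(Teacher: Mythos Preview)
Your approach is a legitimate alternative but is organised differently from the paper's. The paper does not verify formula~(\ref{lpwn2}) after the fact; it \emph{derives} it by iterating a single filtration-descent step (Prop.~\ref{prop_main_descent}): given a cycle in $F_pC_n(A,I)$, the element $G\varphi_j:=e\otimes\varphi_j$ furnishes an explicit boundary showing $\varphi\equiv\varphi'$ with $\varphi'\in F_{p-1}$. Applying this $n$ times and unwinding the recursion yields~(\ref{lpwn2}) together with its bounding chain automatically. In particular the paper never needs to check separately that $\phi(z)$ is a cycle: that is forced once $\phi(z)-z$ is exhibited as a boundary, so your step~(a) is redundant and the combinatorial expansion of $b\phi(z)$ you worry about can be avoided entirely.

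Two further remarks. First, your proposed induction on $n$ is the wrong variable: peeling off one slot of an $n$-cycle does not produce an $(n-1)$-cycle, so there is no clean inductive hypothesis to invoke. The natural parameter is the filtration index $p$ (number of tensor slots not yet forced into $I$), which is exactly how the paper proceeds. Second, your ``slot-by-slot contracting homotopy'' is precisely the paper's $G\varphi_j=e\otimes\varphi_j$; once you recognise this, the bounding chain you seek is simply $\sum_{i=1}^{n}\sum_j \lambda_j\, e_{n+1-i}\otimes t_{i-1,j}$ in the notation of the proof of Prop.~\ref{marker_HHFormula}, and the telescope you anticipate becomes the one-line computation $bG\varphi_j=\varphi_j-e\otimes b\varphi_j+(-1)^n(e\otimes f_n\varphi_j^{(1)}-f_ne\otimes\varphi_j^{(1)})$. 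What the paper's route buys is that each step is a two-term identity rather than a $2^n$-term expansion; what your route would buy, if carried out, is a self-contained check independent of the filtration machinery.
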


As $V$ can be picked large enough to contain any finite set of elements, this
describes $\rho^{-1}$ entirely. In low-degree cases the formula unwinds as%
\[
f_{0}\otimes f_{1}\mapsto e_{1}\otimes f_{1}f_{0}-f_{1}e_{1}\otimes f_{0}%
\]
for $n=1$; and for $n=2$ one gets%
\begin{align*}
f_{0}\otimes f_{1}\otimes f_{2}  &  \mapsto e_{1}\otimes f_{1}e_{2}\otimes
f_{2}f_{0}-f_{1}e_{1}\otimes e_{2}\otimes f_{2}f_{0}\\
&  -e_{1}\otimes f_{1}f_{2}e_{2}\otimes f_{0}+f_{1}e_{1}\otimes f_{2}%
e_{2}\otimes f_{0}\text{.}%
\end{align*}
We finish by spelling out the case $n=3$, which is already fairly involved:
$f_{0}\otimes f_{1}\otimes f_{2}\otimes f_{3}$ maps to%
\begin{align*}
&  +e_{1}\otimes f_{1}e_{2}\otimes f_{2}e_{3}\otimes f_{3}f_{0}-e_{1}\otimes
f_{1}e_{2}\otimes f_{2}f_{3}e_{3}\otimes f_{0}\\
&  -e_{1}\otimes f_{1}f_{2}e_{2}\otimes e_{3}\otimes f_{3}f_{0}+e_{1}\otimes
f_{1}f_{2}e_{2}\otimes f_{3}e_{3}\otimes f_{0}\\
&  -f_{1}e_{1}\otimes e_{2}\otimes f_{2}e_{3}\otimes f_{3}f_{0}+f_{1}%
e_{1}\otimes e_{2}\otimes f_{2}f_{3}e_{3}\otimes f_{0}\\
&  +f_{1}e_{1}\otimes f_{2}e_{2}\otimes e_{3}\otimes f_{3}f_{0}-f_{1}%
e_{1}\otimes f_{2}e_{2}\otimes f_{3}e_{3}\otimes f_{0}\text{.}%
\end{align*}
The unpleasant restriction to finite-dimensional subspaces in the theorem is
of technical nature. As we enlarge such a space, e.g. by taking the union of
two such subspaces, it need not be possible to choose the elements $e_{i}$ so
that the maps as in eq. \ref{lpwn2} prolong compatibly. Only after going to
homology, they all describe the same map $\rho^{-1}$.

This artifact comes from the fact that we can always pick local units for sets
$S$, but have no control how they behave as $S$ enlarges.

\begin{theorem}
\label{ANNOUNCER_ThmExplicitFormula2}We keep the assumptions as in Thm.
\ref{ANNOUNCER_ThmExplicitFormula}. Then every class $[\varphi]\in
HC_{n}(A,I)$ is represented by a cycle%
\[
\varphi=\sum\lambda_{j}\varphi_{j}\quad\text{with}\quad\varphi_{j}%
=f_{0}\otimes f_{1}\otimes\cdots\otimes f_{n}\in I\otimes A^{\otimes n}%
\]
i.e. a linear combination of pure tensors $\varphi_{j}$ with initial slot in
$I$. Take $\varphi^{1},\ldots,\varphi^{d}$ any elements whose classes are
spanning the finite-dimensional subspace $V$ in Thm.
\ref{ANNOUNCER_ThmExplicitFormula} and write $\varphi_{j}^{\alpha}\in I\otimes
A^{\otimes n}$ for the respective pure tensor components. Now pick $V^{\prime
}:=\operatorname*{span}\{\varphi_{j}^{\alpha}\}_{\alpha,j}$,

\begin{itemize}
\item $e_{n}\in I$ as a local left unit for $\bigcup_{\{\varphi_{j}^{\alpha
}\}}\{f_{0}\}$, where the union runs over the $f_{0}$-slots of all
$\varphi_{j}^{\alpha}$; $-$ and inductively going downward for each $2\leq
i\leq n$:

\item $e_{i-1}\in I$ as a local left unit for $\{e_{i}\}\cup\bigcup
_{\{\varphi_{j}^{\alpha}\}}\{f_{i}e_{i}\}$, where the union runs over the
$f_{i}$-slots of all $\varphi_{j}^{\alpha}$.
\end{itemize}

This provides a concrete choice of $V^{\prime}$ and the $e_{i}$ in the
statement of Thm. \ref{ANNOUNCER_ThmExplicitFormula}.
\end{theorem}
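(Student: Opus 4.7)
The plan is to prove the theorem in two stages. First, I would establish the existence of cycle representatives with initial slot in $I$. Second, I would verify that the prescribed downward recursive choice of local units meets the conditions implicit in Theorem~\ref{ANNOUNCER_ThmExplicitFormula}.

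For the first stage, since we are over a field of characteristic zero, I can choose a vector space splitting $A = I \oplus W$. The corresponding decomposition of $A^{\otimes n+1}$ into summands indexed by subsets of $\{0,1,\ldots,n\}$ identifies the kernel of the surjection $A^{\otimes n+1}\twoheadrightarrow (A/I)^{\otimes n+1}$ with the span of those summands having at least one $I$-slot. Hence any chain-level lift of a relative cycle can be written as a sum of pure tensors, each carrying at least one $I$-entry. Using Connes' cyclic symmetry (the quotient by $1-\lambda$ underlying the cyclic module), I then rotate each such tensor so that a distinguished $I$-entry sits in slot zero; the rotated chain represents the same class and lies in $I\otimes A^{\otimes n}$, producing the desired expansion $\varphi=\sum\lambda_j\varphi_j$.

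For the second stage, with the $\varphi^{\alpha}$ and their pure-tensor components $\varphi_j^{\alpha}$ fixed, the downward induction producing $e_n,e_{n-1},\ldots,e_1$ only ever requires a local left unit for a finite subset of $I$. These subsets are indeed finite because $V$ is finite-dimensional (so the totality of the $\varphi_j^{\alpha}$ is finite) and at each step we adjoin only the element $e_i$ alongside the finitely many terms $f_i e_i$. Existence at every stage is therefore guaranteed by the local-left-unit hypothesis on $I$. Setting $V':=\operatorname{span}\{\varphi_j^{\alpha}\}_{\alpha,j}$ then yields a finite-dimensional subspace of $I\otimes A^{\otimes n}$ on which formula~\ref{lpwn2} is defined, with range in $I^{\otimes n+1}$ by construction (every $e_i$ lies in $I$, and each $f_0$ lies in $I$ by the choice of representative).

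The main obstacle is to confirm that these specific $e_i$ are exactly what Theorem~\ref{ANNOUNCER_ThmExplicitFormula} requires. Formula~\ref{lpwn2} only ever places elements of the form $e_i$ or $f_i e_i$ in the $i$-th slot, so the conditions needed for the image to be a cycle in the cyclic complex of $I$ and to represent $\rho^{-1}[\varphi]$ should reduce to identities of the form $e_{i-1}e_i = e_i$ and $e_{i-1}(f_ie_i) = f_i e_i$, together with $e_n f_0 = f_0$ for the final boundary cancellations. The recursion is designed to force precisely these identities, but verifying that nothing further is needed will require a careful walk through the proof of Theorem~\ref{ANNOUNCER_ThmExplicitFormula}, matching each local-unit invocation there against the recursion here, in particular tracking how the binary choice $s_i\in\{\pm\}$ distributes the two possible occupants of slot $i$ across the telescoping boundary computation.
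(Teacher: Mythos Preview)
Your proposal is correct and follows essentially the paper's route: Stage~1 is precisely Lemma~\ref{Lemma_TransportToCyclic} (splitting plus cyclic rotation to put an $I$-entry in slot zero). The verification you flag as the remaining obstacle is carried out in Proposition~\ref{marker_HHFormula} by iterating the one-step descent of Proposition~\ref{prop_main_descent} $n$ times and tracking the initial slot of each pure tensor at every stage via equation~\ref{lpp6}, which shows it is always one of $e_i$ or $f_ie_i$ and hence yields exactly the local-unit conditions $e_nf_0=f_0$, $e_{i-1}e_i=e_i$, $e_{i-1}(f_ie_i)=f_ie_i$ that you anticipate.
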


We also obtain some (weaker) results for Hochschild homology, see Prop.
\ref{marker_HHFormula}.

\section{Preparations\label{marker_PreparationsAndDefOfHH}}

Let $k$ be a field of characteristic zero. In this text, the word
\emph{algebra} refers to an associative $k$-algebra which need not be
commutative and especially not unital. Even if units exist, algebra morphisms
are not required to preserve them. Tensor products are always over $k$.

Let $A$ be an algebra. Write $C_{i}(A):=A^{\otimes i+1}$ for the underlying
groups of Hochschild homology; equip them with the usual differential%
\begin{align*}
b(f_{0}\otimes\cdots\otimes f_{n}):=  &  \sum\nolimits_{i=0}^{n-1}\left(
-1\right)  ^{i}f_{0}\otimes\cdots\otimes f_{i}f_{i+1}\otimes\cdots\otimes
f_{n}\\
&  +(-1)^{n}f_{n}f_{0}\otimes f_{1}\otimes\cdots\otimes f_{n}%
\end{align*}
so that $HH_{i}(A)=H_{i}(\{C_{\bullet}(A),b\})$ is the (na\"{\i}ve) Hochschild
homology of $A$. We shall also need the cyclic permutation operator%
\begin{equation}
t(f_{0}\otimes\cdots\otimes f_{n}):=(-1)^{n}\,f_{n}\otimes f_{0}\otimes
f_{1}\otimes\cdots\otimes f_{n-1}\text{.} \label{lpp3}%
\end{equation}
Define $CC_{n}(A):=C_{n}(A)_{\left\langle t\right\rangle }$, the co-invariants
under $t$, so that (na\"{\i}ve) cyclic homology is given by $HC_{i}%
(A)=H_{i}(\{CC_{\bullet}(A),b\})$.

\begin{remark}
[this definition suffices]This is the correct definition of Hochschild
homology only if $A$ is unital; in general one defines `correct Hochschild
homology' $HH_{i}^{\mathsf{corr}}(A)$ as the homology of the two-row bicomplex%
\[
C_{i}^{\mathsf{corr}}(A):=[C_{i}(A)\overset{1-t}{\longrightarrow}%
B_{i+1}(A)]\text{,}%
\]
where $B_{\bullet}$ is the bar complex and $t$ the cyclic permutation
operator; all details can be found in \cite[\S 2, esp. p. 598 l. 5]{MR997314}.
Equivalently, one can define $HH_{i}^{\mathsf{corr}}(A):=\operatorname*{coker}%
(C_{i}(k)\rightarrow C_{i}(A_{+}))$, where $A_{+}$ denotes the unitalization
of $A$ and the map is induced by $1_{k}\mapsto\mathbf{1}_{A_{+}}$ to the
formal unit of $A_{+}$ \cite[paragr. before Thm. 3.1]{MR997314} (this is the
definition used in \cite{MR934604} and \cite[\S 1.4.1]{MR1217970}). However,
in this text we will only ever make claims about algebras with one-sided local
units. Their bar complexes $B_{\bullet}$ are acyclic by \cite[Cor.
4.5]{MR997314} so that the obvious map $C_{\bullet}(A)\rightarrow C_{\bullet
}^{\mathsf{corr}}(A)$ is a quasi-isomorphism. As a result, for the present
text it is sufficient to take the na\"{\i}ve complex $C_{\bullet}(A)$ as the
definition, may $A$ be unital or not $-$ this is also the favourable choice
when intending to perform concrete computations.
\end{remark}

Given an algebra extension $I\hookrightarrow A\twoheadrightarrow S$, define
relative groups $C_{i}(A,I):=\ker\left(  C_{i}(A)\rightarrow C_{i}%
(A/I)\right)  $ and denote their homology by $HH_{i}(A,I)$, this is relative
Hochschild homology. Similarly $CC_{i}(A,I):=\ker\left(  CC_{i}(A)\rightarrow
CC_{i}(A/I)\right)  $ whose homology is relative cyclic homology, denoted by
$HC_{i}(A,I)$. Then the sequence in eq. \ref{lmj1} is exact, trivially by
construction. The obvious \emph{excision map}%
\begin{equation}
\rho:CC_{i}(I)\longrightarrow CC_{i}(A,I) \label{lpp2}%
\end{equation}
sending a tensor to itself is clearly well-defined. It induces the homological
excision map of eq. \ref{lpj20}, so this direction of the map in Thm.
\ref{ANNOUNCER_ThmExplicitFormula} is easy to describe explicitly. Providing
an explicit inverse is less immediate.

\section{The proof}

\subsection{\label{subsect_SimplestFilt}The Guccione-Guccione filtration}

Henceforth, we shall assume that $I$ has local left units.\ The case of local
right units would be entirely analogous. We shall also assume that $A$ has
local left units; this less natural assumption solely serves the purpose to
have the simple description of cyclic homology as in
\S \ref{marker_PreparationsAndDefOfHH} available (cf. Rmk. in
\S \ref{marker_PreparationsAndDefOfHH}). Define vector subspaces%
\[
F_{p}C_{n}(A)=\left\{
\begin{array}
[c]{l}%
k\text{-linear subspace spanned by }f_{0}\otimes\cdots\otimes f_{n}\text{
with}\\
f_{0},\ldots,f_{n-p}\in I
\end{array}
\right\}
\]
This is a filtration $F_{0}C_{\bullet}(A)\subseteq F_{1}C_{\bullet
}(A)\subseteq\ldots$, $F_{[\geq]n+1}C_{n}(A)=C_{n}(A)$ so that $\bigcup
_{p\geq0}F_{p}C_{\bullet}(A)=C_{\bullet}(A)$. This also induces a filtration
$F_{p}C_{\bullet}(A,I)$. We have $F_{0}C_{n}(A)=C_{n}(I)$.

(this filtration is of course inspired by the filtration in the the original
proof \cite{MR934604}, but by prescribing the position of the slots with
values in $I$ the computations simplify. This idea originates from
\cite{MR1370842})

Split $A$ as a $k$-vector space as $\nu:A\simeq I\oplus(A/I)$. Pick bases
$\mathcal{B}_{I}$ of $I$, $\mathcal{B}_{A/I}$ of $A/I$. Then $\mathcal{B}%
:=\mathcal{B}_{I}\cup\mathcal{B}_{A/I}$ identifies a basis of $A$ through
$\nu^{-1}$. Then equip the $F_{p}C_{n}(A)$ with bases $\{b_{0}\otimes
\cdots\otimes b_{n}\mid b_{0},\ldots,b_{n-p}\in\mathcal{B}_{I}$;
$b_{n-p+1},\ldots,b_{n}\in\mathcal{B\}}$. Call this \emph{standard tensor
basis}. All this depends on choices, either of which will be good enough. By
direct inspection:

\begin{lemma}
$F_{\bullet}$ is a filtration by subcomplexes, i.e. $bF_{p}C_{\bullet
}(A)\subseteq F_{p}C_{\bullet-1}(A)$.
\end{lemma}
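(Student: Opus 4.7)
The plan is to verify the containment $bF_{p}C_{n}(A)\subseteq F_{p}C_{n-1}(A)$ directly on pure tensors, inspecting each summand of $b$ in turn. Fix $f_{0}\otimes\cdots\otimes f_{n}$ with $f_{0},\ldots,f_{n-p}\in I$; the target demands only the first $n-p$ slots to lie in $I$, so after applying $b$ one has a single slot of slack relative to the source, which is exactly the slack that should absorb the collapse of two adjacent entries into one product.

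For a non-cyclic summand $(-1)^{i}f_{0}\otimes\cdots\otimes f_{i}f_{i+1}\otimes\cdots\otimes f_{n}$, I would split on where the multiplication occurs. If $i\leq n-p-1$, then both $f_{i},f_{i+1}\in I$, so $f_{i}f_{i+1}\in I$; the other slots up to position $n-p-1$ are inherited $I$-entries (either $f_{j}$ for $j<i$, or $f_{j+1}$ for $j>i$, both falling in the $I$-range of the source). If $i\geq n-p$, the multiplication lands at position $\geq n-p$, already outside the required $I$-range of the target, while the slots at positions $0,\ldots,n-p-1$ are untouched $I$-entries from the source and thus remain in $I$.

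The cyclic summand $(-1)^{n}f_{n}f_{0}\otimes f_{1}\otimes\cdots\otimes f_{n-1}$ is the unique place where the argument genuinely uses that $I$ is two-sided: position $0$ becomes $f_{n}f_{0}$, which lies in $I$ because $f_{0}\in I$; positions $1,\ldots,n-p-1$ are the inherited slots $f_{j}\in I$. The degenerate regimes $p\geq n+1$, where both source and target coincide with the full space $C_{\bullet}(A)$, are trivial. I do not anticipate any real obstacle here: the verification is essentially index-tracking, supported by a single appeal to the ideal property for the wraparound slot. No use is made of the local-unit hypotheses, which is consistent with this being the easiest step of the whole setup.
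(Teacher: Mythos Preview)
Your proposal is correct and is essentially the same approach as the paper, which proves the lemma merely ``by direct inspection'' without further detail; you have simply spelled out that inspection term by term.
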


For a pure tensor $\varphi=f_{0}\otimes\cdots\otimes f_{n}$ we will use the
shorthand notation $\varphi^{(\ell)}:=f_{0}\otimes\cdots\otimes f_{n-\ell}$
(the last $\ell$ slots removed) and write $\mathsf{in}(\varphi):=f_{0}$ and
$\mathsf{term}(\varphi):=f_{n}$ for the initial and terminal slot.

\begin{proposition}
\label{prop_main_descent}Suppose $\varphi\in F_{p}C_{n}(A)$ with $p\leq n$ is
a cycle (i.e. $b\varphi=0$).

\begin{enumerate}
\item Then it is homologous to a representative $\varphi^{\prime}\in
F_{p-1}C_{n}(A)$.

\item Write $\varphi=\sum\lambda_{j}\varphi_{j}$ with $\lambda_{j}\in k$ and
$\varphi_{j}$ a pure tensor in the standard tensor basis. Suppose $e\in I$ is
a local left unit for $\{\mathsf{in}(\varphi_{j})\}$. Then for each
$\varphi_{j}=f_{0}\otimes f_{1}\otimes\cdots\otimes f_{n}$ one can define%
\begin{equation}
\varphi_{j}^{\prime}:=\left(  -1\right)  ^{n+1}\left(  e\otimes\mathsf{term}%
(\varphi_{j})\varphi_{j}^{(1)}-\mathsf{term}(\varphi_{j})e\otimes\varphi
_{j}^{(1)}\right)  \label{lpwn1}%
\end{equation}
so that $\varphi^{\prime}:=\sum\lambda_{i}\varphi_{j}^{\prime}$ is an explicit solution.
\end{enumerate}
\end{proposition}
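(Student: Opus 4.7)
The plan is to produce an explicit element $\xi\in C_{n+1}(A)$ with $b\xi=\varphi-\varphi^{\prime}$, which settles both parts simultaneously. Since $\varphi=\sum_{j}\lambda_{j}\varphi_{j}$ is a finite sum, the set $\{\mathsf{in}(\varphi_{j})\}$ is finite and the hypothesis on $I$ produces a local left unit $e$ as in the statement. The natural witness is simply
\[
\xi:=\sum_{j}\lambda_{j}\,(e\otimes\varphi_{j})\text{,}
\]
and I would verify the claim by expanding $b(e\otimes\varphi_{j})$ directly.

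Expanding $b(e\otimes f_{0}\otimes\cdots\otimes f_{n})$ splits into three pieces: the $i=0$ contraction collapses to $(ef_{0})\otimes f_{1}\otimes\cdots\otimes f_{n}=\varphi_{j}$ thanks to the local unit identity $ef_{0}=f_{0}$; the cyclic term produces $(-1)^{n+1}(f_{n}e)\otimes\varphi_{j}^{(1)}$; and the remaining contractions $i=1,\dots,n$, after the index shift $k=i-1$, re-index to $-\sum_{k=0}^{n-1}(-1)^{k}\,e\otimes f_{0}\otimes\cdots\otimes f_{k}f_{k+1}\otimes\cdots\otimes f_{n}$. The formula for $b\varphi_{j}$ lets us rewrite this last piece as $-e\otimes b\varphi_{j}+(-1)^{n}\,e\otimes\mathsf{term}(\varphi_{j})\varphi_{j}^{(1)}$, so collecting yields
\[
b\xi_{j}=\varphi_{j}-e\otimes b\varphi_{j}+(-1)^{n}\bigl(e\otimes\mathsf{term}(\varphi_{j})\varphi_{j}^{(1)}-\mathsf{term}(\varphi_{j})e\otimes\varphi_{j}^{(1)}\bigr)=\varphi_{j}-\varphi_{j}^{\prime}-e\otimes b\varphi_{j}\text{.}
\]
Summing over $j$, the cycle condition $b\varphi=0$ kills the awkward middle term, giving $b\xi=\varphi-\varphi^{\prime}$ and establishing the homology.

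It remains to verify $\varphi^{\prime}\in F_{p-1}C_{n}(A)$. Each $\varphi_{j}^{\prime}$ is a linear combination of $e\otimes f_{n}f_{0}\otimes f_{1}\otimes\cdots\otimes f_{n-1}$ and $f_{n}e\otimes f_{0}\otimes f_{1}\otimes\cdots\otimes f_{n-1}$; in both cases slot $0$ lies in $I$ because $e\in I$ and $I$ is a two-sided ideal, slot $1$ lies in $I$ because $f_{0}\in I$ (this is where $p\leq n$ enters: it guarantees $n-p\geq0$, so $f_{0}$ is among the $I$-slots of $\varphi_{j}$), and slots $2,\dots,n-p+1$ coincide with $f_{1},\dots,f_{n-p}$, which lie in $I$ by the $F_{p}$-hypothesis. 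Thus the first $n-p+2$ slots are in $I$, placing $\varphi^{\prime}$ in $F_{p-1}C_{n}(A)$ as required. I expect the only real obstacle to be the sign and re-indexing bookkeeping in the middle step; the key observation is that the messy contractions in $b\xi_{j}$ reassemble into $e\otimes b\varphi_{j}$ modulo a single boundary contribution, after which the cycle hypothesis does all the cancellation work once we sum over $j$.
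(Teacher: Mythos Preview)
Your proof is correct and follows essentially the same route as the paper: your witness $\xi=\sum_{j}\lambda_{j}(e\otimes\varphi_{j})$ is precisely the paper's $\sum_{j}\lambda_{j}G\varphi_{j}$, and the expansion of $b(e\otimes\varphi_{j})$ into $\varphi_{j}-e\otimes b\varphi_{j}+(-1)^{n}(e\otimes f_{n}\varphi_{j}^{(1)}-f_{n}e\otimes\varphi_{j}^{(1)})$ matches the paper's computation verbatim. Your filtration check is slightly more detailed than the paper's index count but reaches the same conclusion.
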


Note that we could equivalently demand $\varphi\in F_{p}C_{n}(A,I)$ and obtain
$\varphi^{\prime}\in F_{p-1}C_{n}(A,I)$ for the output.

\begin{proof}
Write $\varphi=\sum\lambda_{j}\varphi_{j}$ with $\lambda_{j}\in k$ and
$\varphi_{j}$ a pure tensor in the standard tensor basis. Since $p\leq n$ the
initial slot of each $\varphi_{j}$ lies in $I$. Let $e\in I$ be a local left
unit for the finite set $\{\mathsf{in}(\varphi_{j})\}$; exists by our
assumption on $I$. For each $\varphi_{j}=f_{0}\otimes f_{1}\otimes
\cdots\otimes f_{n}\in I\otimes A^{\otimes n}$ define $\varphi_{j}^{\prime}$
as in eq. \ref{lpwn1}; more precisely (ignoring the superscripts)
\begin{align*}
\varphi_{j}^{\prime}  &  =(-1)^{n+1}(\overset{0}{e}\otimes\overset{1}%
{f_{n}f_{0}}\otimes\overset{2}{f_{1}}\otimes\cdots\otimes\overset
{n-(p-1)}{f_{n-p}}\otimes\cdots\otimes f_{n-1}\\
&  -\overset{0}{f_{n}e}\otimes\overset{1}{f_{0}}\otimes\cdots\overset
{n-(p-1)}{f_{n-p}}\cdots\otimes f_{n-1})
\end{align*}
Next, define $\varphi^{\prime}:=\sum\lambda_{j}\varphi_{j}^{\prime}$. Firstly,
we observe $\varphi^{\prime}\in F_{p-1}C_{n}(A)$; this is clear from counting
indices (which for the comfort of the reader we have spelled out above). Next,
we need to check that $\varphi^{\prime}$ is homologous. To this end, define
for all $j$ the element $G\varphi_{j}:=e\otimes\varphi_{j}\in F_{p}C_{n+1}%
(A)$; this even lies in $F_{p-1}$. We compute $b(G\varphi_{j})$ straight from
the definition, giving%
\[
bG\varphi_{j}=\varphi_{j}-e\otimes b\varphi_{j}+(-1)^{n}\left(  e\otimes
f_{n}\varphi_{j}^{(1)}-f_{n}e\otimes\varphi_{j}^{(1)}\right)  \text{.}%
\]
To obtain this we have used the crucial fact that $e$ acts as a left unit on
the inital slots of all $\varphi_{j}$. Thus,%
\begin{align*}
&  \varphi=\sum\lambda_{j}\varphi_{j}=b(\sum\lambda_{j}G\varphi_{j})+e\otimes
b\left(  \sum\lambda_{j}\varphi_{j}\right) \\
&  \qquad+\sum\lambda_{j}(-1)^{n+1}\left(  e\otimes f_{n}\varphi_{j}%
^{(1)}-f_{n}e\otimes\varphi_{j}^{(1)}\right)  =b(\ldots)+e\otimes
b\varphi+\varphi^{\prime}%
\end{align*}
However, by assumption $\varphi$ is a cycle, i.e. $b\varphi=0$, and
$b(\ldots)$ is a boundary, so in homology we have $\varphi\equiv
\varphi^{\prime}$.
\end{proof}

\begin{corollary}
\label{Lemma_ApplyTToGetToF0}Every cycle $\varphi\in F_{n}C_{n}(A,I)$ is
homologous to a cycle $\varphi^{\prime}\in F_{0}C_{n}(A,I)$.
\end{corollary}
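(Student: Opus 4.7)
The plan is a straightforward induction on the filtration degree $p$, applying Proposition \ref{prop_main_descent} repeatedly to strip off one unit of the filtration at each stage. Starting from $\varphi \in F_n C_n(A,I)$, the hypothesis $p \leq n$ of the proposition is satisfied (with equality), so I would invoke it to produce a homologous element $\varphi^{(1)} \in F_{n-1} C_n(A,I)$; then apply it again to $\varphi^{(1)}$ to obtain $\varphi^{(2)} \in F_{n-2} C_n(A,I)$, and so on, terminating after $n$ steps at a representative in $F_0 C_n(A,I)$.

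Before the induction can run, I need to check two small things at each stage. First, that the output of the proposition is itself a cycle: since the proposition writes $\varphi = \varphi' + b(\dots) + e \otimes b\varphi$, and we start with $b\varphi = 0$, we get $\varphi - \varphi' = b(\dots)$, hence $b\varphi' = b\varphi - b^2(\dots) = 0$, so the induction hypothesis (being a cycle) is maintained. Second, that the construction preserves the relative subcomplex $C_n(A,I) \subseteq C_n(A)$, so that we stay in $F_{\bullet} C_n(A,I)$ throughout; this is exactly the content of the parenthetical remark following the proposition, which notes that if we start in $F_p C_n(A,I)$ the explicit formula in eq.\ \ref{lpwn1} lands in $F_{p-1} C_n(A,I)$ (the new initial slot $e$ or $f_n e$ lies in $I$, and the remaining slots are unchanged modulo regrouping).

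There is really no substantive obstacle here; the corollary is a packaging statement. The only thing worth being careful about is that at each step one must re-choose a local left unit $e$ suitable for the initial slots of the current representative, and these choices at different stages of the induction need not be compatible. This is precisely the non-canonicity flagged in the discussion after Theorem \ref{ANNOUNCER_ThmExplicitFormula}, and it is harmless for the qualitative statement of the corollary (but will matter later when one wants to read off the explicit formula of Theorem \ref{ANNOUNCER_ThmExplicitFormula} from the iterated descent).
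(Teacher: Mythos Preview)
Your proposal is correct and follows exactly the paper's approach: the paper's proof is the single sentence ``Just apply Prop.\ \ref{prop_main_descent} repeatedly $n$ times,'' and you have simply unpacked this by making the induction explicit and verifying that the intermediate representatives remain cycles in the relative complex. Your additional remarks on re-choosing the local unit at each stage are accurate and anticipate the later explicit computation.
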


\begin{proof}
Just apply Prop. \ref{prop_main_descent} repeatedly $n$ times.
\end{proof}

\subsection{A refined filtration}

There is a better filtration than $F_{p}$, namely the cyclic symmetrization:
Write $tF_{p}$ for the filtration after applying $t$ (as in eq. \ref{lpp3}),
i.e. $tF_{p}C_{n}(A)=\{\varphi\mid t\cdot\varphi\in F_{p}C_{n}(A)\}$. Now
define $\tilde{F}_{p}C_{n}(A):=%
{\textstyle\sum\nolimits_{j=0}^{n}}
(t^{j}F_{n})C_{n}(A)$. Explicitly, $\tilde{F}_{p}C_{n}(A)$ is the subspace
spanned by pure tensors with $n-p+1$ \textit{cyclically successive} slots in
$I$. As for $F_{p}$, we find $\tilde{F}_{0}C_{n}(A)=C_{n}(I)$ and $\tilde
{F}_{n}C_{n}(A)$ is the subspace spanned from pure tensors with at least one
slot in $I$. The true advantage of $\tilde{F}_{p}$ is that it exhausts the
relative homology group:

\begin{lemma}
\label{marker_AllInNFilteredPartModuloCyclicPermutations}$C_{n}(A,I)=\tilde
{F}_{n}C_{n}(A,I)$.
\end{lemma}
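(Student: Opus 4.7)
The plan is to exploit the vector-space splitting $\nu:A\simeq I\oplus(A/I)$ together with the standard tensor basis of $C_{n}(A)$ introduced in \S\ref{subsect_SimplestFilt}. With respect to this basis, the quotient map $A\twoheadrightarrow A/I$ kills every element of $\mathcal{B}_{I}$ and identifies $\mathcal{B}_{A/I}$ with a basis of $A/I$. Hence the induced map $C_{n}(A)\rightarrow C_{n}(A/I)$ sends a standard basis tensor $b_{0}\otimes\cdots\otimes b_{n}$ to zero whenever at least one $b_{i}\in\mathcal{B}_{I}$, and otherwise sends it bijectively onto the corresponding basis tensor of $(A/I)^{\otimes n+1}$. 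Equivalently: the direct sum decomposition $A^{\otimes n+1}=\bigoplus_{(X_{0},\ldots,X_{n})}X_{0}\otimes\cdots\otimes X_{n}$ indexed by choices $X_{i}\in\{I,A/I\}$ is compatible with the quotient, which projects onto the single summand where every $X_{i}=A/I$.

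Consequently $C_{n}(A,I)=\ker(C_{n}(A)\rightarrow C_{n}(A/I))$ is precisely the $k$-span of those standard basis tensors which have at least one slot in $\mathcal{B}_{I}\subseteq I$. Unwinding the definition of $\tilde{F}_{p}$ at $p=n$ gives $n-p+1=1$, so that $\tilde{F}_{n}C_{n}(A)$ is by definition exactly the span of pure tensors with at least one slot in $I$; this coincides with the span just described. Conversely, any pure tensor having a slot in $I$ is automatically killed by $C_{n}(A)\rightarrow C_{n}(A/I)$, since that slot alone projects to $0$ in $A/I$. We therefore have $\tilde{F}_{n}C_{n}(A)\subseteq C_{n}(A,I)$ as well, and combining both inclusions yields
\[
\tilde{F}_{n}C_{n}(A,I)=\tilde{F}_{n}C_{n}(A)\cap C_{n}(A,I)=\tilde{F}_{n}C_{n}(A)=C_{n}(A,I).
\]

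There is no genuine obstacle: the statement reduces to an unwinding of definitions once the basis of $A$ compatible with $I\hookrightarrow A\twoheadrightarrow A/I$ is fixed. The only mildly subtle point to keep in mind is that $\tilde{F}_{n}C_{n}(A,I)$ is defined as the intersection $\tilde{F}_{n}C_{n}(A)\cap C_{n}(A,I)$, but the inclusion $\tilde{F}_{n}C_{n}(A)\subseteq C_{n}(A,I)$ makes this intersection trivial and renders the relative and absolute versions of $\tilde{F}_{n}$ at the top level identical.
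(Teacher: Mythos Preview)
Your proof is correct and follows essentially the same approach as the paper: both use the standard tensor basis coming from the splitting $A\simeq I\oplus(A/I)$ to identify $C_{n}(A,I)$ with the span of basis tensors having at least one slot in $\mathcal{B}_{I}$, which is exactly $\tilde{F}_{n}C_{n}(A)$. The paper's own proof is a single sentence stating this as ``clearly''; you have merely unpacked the same observation in more detail, including the harmless bookkeeping that $\tilde{F}_{n}C_{n}(A)\subseteq C_{n}(A,I)$ makes the relative and absolute versions of $\tilde{F}_{n}$ coincide at this level.
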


\begin{proof}
Clearly in the standard tensor basis $C_{i}(A,I)=\ker\left(  C_{i}%
(A)\rightarrow C_{i}(A/I)\right)  $ is the subspace spanned by those
$\varphi:=f_{0}\otimes\cdots\otimes f_{n}$ with at least one $f_{j}%
\in\mathcal{B}_{I}$.
\end{proof}

Now we transport the above considerations to cyclic homology. Almost
everything goes through: The filtration $F_{p}$ does not make sense on
$CC_{\bullet}(A)$ since it is not preserved by $t$, but $\tilde{F}_{p}$ is
clearly well-defined.

\begin{lemma}
\label{Lemma_TransportToCyclic}We have

\begin{enumerate}
\item $CC_{n}(A,I)=\tilde{F}_{n}CC_{n}(A,I)$ and

\item for every cycle $\varphi\in\tilde{F}_{n}CC_{n}(A,I)$ there is a
representative in $F_{n}C_{n}(A,I)$ (under the map $F_{n}C_{n}(A,I)\rightarrow
\tilde{F}_{n}CC_{n}(A,I)$ ) and we have $\varphi\equiv\varphi^{\prime}$ with
$\varphi^{\prime}\in\tilde{F}_{0}CC_{n}(A,I)$.
\end{enumerate}
\end{lemma}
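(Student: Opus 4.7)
The plan is to handle the two parts of the lemma in turn.

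For part (1), I would observe that $\tilde{F}_{n}CC_{n}(A,I)$ is by construction the image of $\tilde{F}_{n}C_{n}(A,I)$ under the quotient $C_{n}(A)\twoheadrightarrow CC_{n}(A)$. By Lemma~\ref{marker_AllInNFilteredPartModuloCyclicPermutations} the latter space coincides with $C_{n}(A,I)$, so the claim reduces to surjectivity of $C_{n}(A,I)\to CC_{n}(A,I)$. This is a short diagram chase: any class in $CC_{n}(A,I)$ lifts to some $\varphi\in C_{n}(A)$ whose image in $C_{n}(A/I)$ has the form $(1-t)\bar{\eta}$; picking any lift $\eta\in C_{n}(A)$ of $\bar{\eta}$ and subtracting $(1-t)\eta$ from $\varphi$ produces a representative in $C_{n}(A,I)$ without altering the class in $CC_{n}$.

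For part (2) I would split the claim into the representative statement and the descent statement. For the representative, take the $C_{n}(A,I)$-lift from part (1), decompose it into pure tensors, and for each tensor cyclically rotate an $I$-carrying slot into position zero. Since such a rotation is the effect of an iterate of $t^{-1}$, which agrees with the identity in $CC_{n}$ (up to the built-in sign in the definition of $t$), the collected sum lies in $F_{n}C_{n}(A,I)$ and still represents $\varphi$. For the descent I would iterate the formal identity from the proof of Prop.~\ref{prop_main_descent} starting from this representative $\psi\in F_{n}C_{n}(A,I)$:
\[
\psi = b(G\psi) + e\otimes b\psi + \psi^{\sharp},\qquad \psi^{\sharp}\in F_{p-1}C_{n}(A,I).
\]
The term $b(G\psi)$ is a Hochschild boundary and in particular a cyclic boundary, so $[\psi]\equiv[\psi^{\sharp}]+[e\otimes b\psi]$ in $CC_{n}$. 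At the initial step $p=n$ one checks directly that every pure tensor component of $b\psi$ has its first slot in $I$, hence $e\otimes b\psi\in F_{n-1}C_{n}(A,I)$ and the iteration descends cleanly by one filtration level.

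The main obstacle is that for subsequent steps ($p<n$) the error $e\otimes b\psi$ fails a priori to lie in $F_{p-1}$: the $b$-contractions that merge two consecutive initial $I$-slots produce terms only in $F_{p}$, not $F_{p-1}$. To get around this I would choose the local left units recursively rather than independently, letting $e_{i-1}$ be a local left unit not only for the current initial slots but also for $e_{i}$ and for the $f_{i}e_{i}$-tensors arising at the previous step — exactly the downward recipe of Thm.~\ref{ANNOUNCER_ThmExplicitFormula2}. With these compatible choices the accumulating residues stay within the descending filtration and, after $n$ iterations, the resulting representative lies in $F_{0}C_{n}(A,I)=C_{n}(I)=\tilde{F}_{0}CC_{n}(A,I)$, which is the required $\varphi'$.
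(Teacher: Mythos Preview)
Your treatment of part~(1) and of the ``representative'' half of part~(2) matches the paper's argument, with some welcome extra detail on why $C_n(A,I)\to CC_n(A,I)$ is surjective.

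The descent half of part~(2) is where you depart from the paper, and where a genuine gap appears. The paper does not work upstairs in $C_n$ with a non-cycle lift at all: it simply transports Corollary~\ref{Lemma_ApplyTToGetToF0} to the complex $(CC_\bullet,b)$, remarking that ``cyclic homology has the same differential $b$''. Since $\varphi$ is an honest cycle there, $b\varphi=0$ in $CC_{n-1}$ and the iteration of Prop.~\ref{prop_main_descent} runs $n$ times with no error term to track. You instead keep a lift $\psi\in F_nC_n(A,I)$ that is \emph{not} a Hochschild cycle and try to push the residue $e\otimes b\psi$ down the $F_p$-filtration by clever choices of the $e_i$.

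The claim ``with these compatible choices the accumulating residues stay within the descending filtration'' is the step that fails. At every stage $\psi_k-\psi_0$ is a $b$-boundary (it is $-b(G\psi_0)-b(G\psi_1)-\cdots$), so $b\psi_k=b\psi_0$ for all $k$. Thus the error at step $k$ is $e_{n-k}\otimes b\psi_0$, and $b\psi_0$ lies only in $F_{n-1}C_{n-1}$ (just slot~$0$ forced into $I$); tensoring with any $e_{n-k}\in I$ lands in $F_{n-1}C_n$ and never in $F_{n-k}$ for $k\geq 2$. The recursive recipe of Thm.~\ref{ANNOUNCER_ThmExplicitFormula2} only guarantees that $e_{i-1}$ acts as a left unit on the initial slots produced at the previous step---exactly what is needed for the Prop.~\ref{prop_main_descent} identity to be valid---but it exerts no control whatsoever on the filtration level of $e\otimes b\psi$. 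So your iteration stalls at $F_{n-1}$ instead of reaching $F_0=C_n(I)$.
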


\begin{proof}
For the first claim pick a lift from $CC_{n}(A,I)$ to $C_{n}(A,I)$, then apply
Lemma \ref{marker_AllInNFilteredPartModuloCyclicPermutations}. For the second
claim, write $[\varphi]$ for an equivalence class under the cyclic
permutations $t$. Let $[\varphi]\in\tilde{F}_{n}CC_{n}(A,I)$ be given. Then
$[\varphi]=\sum\lambda_{j}[\varphi_{j}]$ with $\varphi_{j}$ pure tensors in
our standard tensor basis. For each $\varphi_{j}=[f_{0}\otimes\cdots\otimes
f_{n}]$ at least one slot $f_{i}$ lies in $I$, so we may pick the suitable
permutation $t^{i}\varphi_{j}$ so that wlog $f_{0}\in I$; giving a lift of
$\varphi_{j}$ to $F_{n}C_{n}(A,I)$; and then wlog we have a representative
$\varphi\in F_{n}C_{n}(A,I)$. Corollary \ref{Lemma_ApplyTToGetToF0} applies,
giving $\varphi\equiv\varphi^{\prime}$; this holds invariably since cyclic
homology has the same differential $b$.
\end{proof}

\begin{proof}
[Proof of Thm. \ref{ANNOUNCER_ThmExcision}]By Lemma
\ref{Lemma_TransportToCyclic} every $\varphi\in HC_{n}(A,I)$ has a
representative in the filtration step $\tilde{F}_{n}CC_{n}(A,I)$ and it
satisfies $\varphi\equiv\varphi^{\prime}$ with $\varphi^{\prime}\in\tilde
{F}_{0}CC_{n}(A,I)=CC_{n}(I)$. But this just means that $HC_{n}(A,I)=\tilde
{F}_{0}HC_{n}(A,I)=HC_{n}(I)$.
\end{proof}

It is clear that this actually yields a method to produce a concrete
representative in $HC_{n}(I)$ just by evaluating $\varphi^{\prime}$ in
concrete terms. We will do this in the next section.

\section{Proof of the explicit formula}

In this section we prove Thm. \ref{ANNOUNCER_ThmExplicitFormula} \& Thm.
\ref{ANNOUNCER_ThmExplicitFormula2}. We keep the assumptions of the last section.

\begin{proposition}
\label{marker_HHFormula}Suppose $\varphi\in F_{n}C_{n}(A,I)$ is a cycle, i.e.
$b\varphi=0$. Write $\varphi=\sum\lambda_{j}\varphi_{j}$ with each
$\varphi_{j}$ a pure tensor in our standard tensor basis, say $\varphi
_{j}=f_{0}\otimes f_{1}\otimes\cdots\otimes f_{n}$ with $f_{0}\in I$.

\begin{itemize}
\item Let $e_{n}\in I$ be a local left unit for $\bigcup\{f_{0}\}$, where the
union runs over the $f_{0}$-slots of all $\varphi_{j}$,

\item and for $i\leq n$ let $e_{i-1}$ be a local left unit for $\{e_{i}%
\}\cup\bigcup\{f_{i}e_{i}\}$, where the union runs over the $f_{i}$-slots of
all $\varphi_{j}$.
\end{itemize}

For each $\varphi_{j}=f_{0}\otimes f_{1}\otimes\cdots\otimes f_{n}$ define%
\begin{equation}
\varphi_{j}^{\prime}=\sum_{s_{1}\ldots s_{n}\in\{\pm\}}\left(  -1\right)
^{s_{1}+\cdots+s_{n}}\underset{s_{1}}{\underbrace{%
\begin{array}
[c]{c}%
e_{1}\otimes f_{1}\\
f_{1}e_{1}\otimes
\end{array}
}}\underset{s_{2}}{\underbrace{%
\begin{array}
[c]{c}%
e_{2}\otimes f_{2}\\
f_{2}e_{2}\otimes
\end{array}
}}\cdots\underset{s_{n}}{\underbrace{%
\begin{array}
[c]{c}%
e_{n}\otimes f_{n}\\
f_{n}e_{n}\otimes
\end{array}
}}f_{0} \label{lpp11}%
\end{equation}
(where for each underbrace we take the upper term if $s_{i}=+$, the lower if
$s_{i}=-$) and then $\varphi^{\prime}:=\sum\lambda_{j}\varphi_{j}^{\prime}\in
C_{n}(I)$ is an explicit representative of the same homology class as
$\varphi$.
\end{proposition}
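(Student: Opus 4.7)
The plan is to obtain $\varphi'$ as the output of Proposition \ref{prop_main_descent} applied $n$ times in succession, pushing $\varphi$ from $F_{n}C_{n}(A,I)$ down to $F_{0}C_{n}(A,I)=C_{n}(I)$. Explicitly, I would set $\varphi^{[0]}:=\varphi$ and, for $k=1,\ldots,n$, take $\varphi^{[k]}$ to be the output of Proposition \ref{prop_main_descent} on input $\varphi^{[k-1]}$ with local left unit $e_{n-k+1}$. The proposition then produces $\varphi^{[k]}\in F_{n-k}C_{n}(A,I)$ a cycle homologous to $\varphi^{[k-1]}$, so $\varphi':=\varphi^{[n]}\in C_{n}(I)$ is homologous to $\varphi$ by transitivity, and by construction it lives in the correct group.

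The substantive check is that at each stage $k$ the nominated $e_{n-k+1}$ is really a local left unit for the initial slots of $\varphi^{[k-1]}$. I would prove by induction on $k$ the joint invariant that (i) every pure tensor of $\varphi^{[k]}$ has initial slot in $\{e_{n-k+1}\}\cup\{f_{n-k+1}\,e_{n-k+1}\}$ (with the $f_{n-k+1}$ ranging over the $f_{n-k+1}$-slots of the input $\varphi_{j}$), and (ii) the final $n-k$ slots of every such pure tensor are the untouched literal tail $f_{1}\otimes f_{2}\otimes\cdots\otimes f_{n-k}$ of the originating $\varphi_{j}$. Invariant (ii) is immediate from \eqref{lpwn1}, since the descent formula only affects the first two slots and shifts the remaining ones rightward after dropping the old terminal; invariant (i) then follows: the "$+$" branch delivers initial $e_{n-k+1}$, while the "$-$" branch delivers $(\text{old terminal})\cdot e_{n-k+1}=f_{n-k+1}\,e_{n-k+1}$ using (ii) at stage $k-1$. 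The base case $k=1$ is read off \eqref{lpwn1} using the first bullet of the hypothesis (which says $e_{n}$ is a local left unit for the $f_{0}$-slots); in the inductive step, (i) at stage $k-1$ combined with the hypothesis (setting $i=n-k+2$) supplies $e_{n-k+1}$ as a local left unit for the initial slots of $\varphi^{[k-1]}$, so Proposition \ref{prop_main_descent} legitimately fires.

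Finally I would reassemble $\varphi^{[n]}$ into the closed form \eqref{lpp11}. The $n$ successive binary choices produce $2^{n}$ pure tensors per input $\varphi_{j}$, which correspond bijectively (via $s_{n-k+1}\in\{\pm\}$ recording the branch at descent $k$) to the $2^{n}$ summands of \eqref{lpp11}; a direct unwinding confirms that the multiplicative gluing at each "$+$" boundary in the underbrace notation of \eqref{lpp11} matches exactly what the corresponding descent step produces, precisely as illustrated by the $n=1,2,3$ examples spelled out in the introduction. For signs, each descent contributes the prefactor $(-1)^{n+1}$ of \eqref{lpwn1} on top of the $\pm$ coming from its branch choice, so after $n$ descents the fixed prefactor $(-1)^{n(n+1)}=+1$ disappears and only $(-1)^{s_{1}+\cdots+s_{n}}$ survives. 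I expect the main obstacle to lie entirely in this reassembly — a combinatorial bookkeeping exercise matching the nested iterated-descent output to the compact underbrace notation of \eqref{lpp11} — rather than anything conceptual, as the homological content is already entirely carried by the iterated application of Proposition \ref{prop_main_descent}.
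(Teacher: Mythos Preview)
Your proposal is correct and follows essentially the same approach as the paper: iterate Proposition \ref{prop_main_descent} $n$ times (your $\varphi^{[k]}$ is the paper's $t_k$), track the initial and terminal slots inductively to verify that the prescribed $e_i$ serve as valid local left units at each stage, and unwind the $2^n$ branches into the closed formula, observing that the accumulated prefactor $((-1)^{n+1})^n=(-1)^{n(n+1)}=1$ drops out. Your explicit invariants (i) and (ii) package the same information as the paper's computation of $\mathsf{in}(t_i^{s_1\ldots s_i})$ and $\mathsf{term}(t_i^{s_1\ldots s_i})$.
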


\begin{remark}
Instead of a single $\varphi\in F_{n}C_{n}(A,I)$ we can work with finitely
many $\varphi^{1},\ldots,\varphi^{d}\in F_{n}C_{n}(A,I)$ and find a uniform
choice of the $e_{i}$ by taking the finite union of the sets $\{\mathsf{in}%
(\varphi_{j})\}$, $\{e_{i}\}\cup\bigcup\{f_{i}e_{i}\}$ appearing for the
individual $\varphi^{\alpha}$ instead.
\end{remark}

\begin{remark}
The same result holds for cyclic homology, with exactly the same proof. This
result does not prove excision for Hochschild homology since it will generally
not be true that $F_{n}HH_{n}(A,I)=HH_{n}(A,I)$.
\end{remark}

\begin{proof}
We can construct a representative of the homology class $[\varphi]$ in
$F_{0}C_{n}(A)=C_{n}(I)$ by using the procedure $\varphi\rightsquigarrow
\varphi^{\prime}$ of Prop. \ref{prop_main_descent} iteratively $n$ times. For
each iteration the element $e$ will need to be different; let us write $e_{i}$
for the element appearing in the $(n+1-i)$-th iteration $-$ we start counting
with $i:=1$; we pick and fix these $e_{n},\ldots,e_{1}\in I$. We can now
reduce the computation to pure tensors in our standard tensor basis by
linearity: Suppose $\varphi=f_{0}\otimes\cdots\otimes f_{n}$ with $f_{0}\in
I$. Recall that we have%
\begin{equation}
\varphi^{\prime}=\left(  -1\right)  ^{n+1}\left(  e\otimes\mathsf{term}%
(\varphi)\varphi^{(1)}-\mathsf{term}(\varphi)e\otimes\varphi^{(1)}\right)
\qquad\text{(for a suitable }e\text{)} \label{lpj4}%
\end{equation}
Write $t_{0}:=\varphi$ and $t_{i}:=t_{i-1}^{\prime}$ (with the prime
superscript indicating the procedure of Prop. \ref{prop_main_descent}). Note
that this construction applied to a pure tensor gives a linear combination of
two pure tensors.\ Clearly $t_{i}$ will be a linear combination of $2^{i}$
pure tensors, we will write%
\begin{equation}
t_{i}=\sum_{s_{1}\ldots s_{i}\in\{\pm\}}t_{i}^{s_{1}\ldots s_{i}}%
\text{,}\qquad\text{(for }i\geq1\text{)} \label{lpj5}%
\end{equation}
where the superscripts $s_{j}\in\{+,-\}$ encode whether we have picked the
first or second term in eq. \ref{lpj4}. To be precise: If $t_{i-1}$ (for
$i\geq1$) comes with a presentation as in eq. \ref{lpj5},%
\begin{align}
&  t_{i}=t_{i-1}^{\prime}=\sum_{s_{1}\ldots s_{i-1}\in\{\pm\}}\left(
t_{i-1}^{s_{1}\ldots s_{i-1}}\right)  ^{\prime}=\left(  -1\right)  ^{n+1}%
\sum_{s_{1}\ldots s_{i-1}\in\{\pm\}}\sum_{s_{i}\in\{\pm\}}\label{lpj6}\\
&  \qquad\ldots\left\{
\begin{array}
[c]{ll}%
e_{n+1-i}\otimes\mathsf{term}(t_{i-1}^{s_{1}\ldots s_{i-1}})\cdot\left(
t_{i-1}^{s_{1}\ldots s_{i-1}}\right)  ^{(1)} & \text{if }s_{i}=+\\
-\mathsf{term}(t_{i-1}^{s_{1}\ldots s_{i-1}})e_{n+1-i}\otimes\left(
t_{i-1}^{s_{1}\ldots s_{i-1}}\right)  ^{(1)} & \text{if }s_{i}=-
\end{array}
\right.  \text{,} \label{lpj7}%
\end{align}
where the formula in eq. \ref{lpj4} was applicable in eq. \ref{lpj7} since the
expression was decomposed into pure tensors already; $e_{n+1-i}$ denotes the
local left unit picked in this step. Eq. \ref{lpj7} gives us a presentation of
$t_{i}$ as in eq. \ref{lpj5} with%
\begin{equation}
t_{i}^{s_{1}\ldots s_{i}}=\left(  -1\right)  ^{n+1}\left\{
\begin{array}
[c]{ll}%
e_{n+1-i}\otimes\mathsf{term}(t_{i-1}^{s_{1}\ldots s_{i-1}})\cdot\left(
t_{i-1}^{s_{1}\ldots s_{i-1}}\right)  ^{(1)} & \text{if }s_{i}=+\\
-\mathsf{term}(t_{i-1}^{s_{1}\ldots s_{i-1}})e_{n+1-i}\otimes\left(
t_{i-1}^{s_{1}\ldots s_{i-1}}\right)  ^{(1)} & \text{if }s_{i}=-
\end{array}
\right.  \text{.} \label{lpj8}%
\end{equation}
We may now inductively evaluate $\mathsf{term}(t_{i-1}^{s_{1}\ldots s_{i-1}}%
)$: The above equation (just as well as eq. \ref{lpj4}) shows that
irrespective of $s_{i}$ each iteration of eq. \ref{lpj4} removes the last slot
in each pure tensor. Thus, $\mathsf{term}(t_{i}^{s_{1}\ldots s_{i}})=f_{n-i}$
for $i\geq0$. To evaluate the initial term, we follow eq. \ref{lpj7} and see
that%
\begin{equation}
\mathsf{in}(t_{i}^{s_{1}\ldots s_{i-1}s_{i}})=\left(  -1\right)
^{n+1}\left\{
\begin{array}
[c]{ll}%
e_{n-i+1} & \text{if }s_{i}=+\\
-f_{n-i+1}e_{n-i+1} & \text{if }s_{i}=-
\end{array}
\right.  \label{lpp6}%
\end{equation}
for $i\geq1$. This provides us with an inductive description of the initial
terms which we will need later. Eq. \ref{lpj8} simplifies in view of our
explicit knowledge of the terminal terms and unwinding this inductive formula
we get%
\begin{align*}
t_{n}^{s_{1}\ldots s_{n}}  &  =\left(  \left(  -1\right)  ^{n+1}\right)
^{n}\\
&  \left(  -1\right)  ^{s_{1}+\cdots+s_{n}}\underset{s_{1}}{\underbrace{%
\begin{array}
[c]{c}%
e_{1}\otimes f_{1}\\
f_{1}e_{1}\otimes
\end{array}
}}\underset{s_{2}}{\underbrace{%
\begin{array}
[c]{c}%
e_{2}\otimes f_{2}\\
f_{2}e_{2}\otimes
\end{array}
}}\cdots\underset{s_{n}}{\underbrace{%
\begin{array}
[c]{c}%
e_{n}\otimes f_{n}\\
f_{n}e_{n}\otimes
\end{array}
}}t_{0}^{(n)}\text{.}%
\end{align*}
Using eq. \ref{lpj5} and $t_{0}^{(n)}=\varphi^{(n)}=f_{0}$, we get eq.
\ref{lpwn2}. Finally, we need to identify the requirements on how the $e_{i}$
can be chosen: From the assumptions of Prop. \ref{prop_main_descent} (and the
discussion after eqs. \ref{lpj6} \& \ref{lpj7}) we see that $e_{n+1-i}$ needs
to act as a left unit on all the initial slots of the pure tensors appearing
in $t_{i-1}$. By eq. \ref{lpp6} this means that $e_{n}$ needs to act as a left
unit on the $f_{0}$, and for $i\geq1$ the element $e_{n+1-i}$ needs to act as
a left unit on $\{e_{n-i+2},f_{n-i+2}e_{n-i+2}\}$. We obtain our claim by
re-indexing: With $i^{\prime}:=n-i+1$ we get that $e_{i^{\prime}}$ needs to
act as a left unit on the initial slots of the pure tensors in $t_{n-i^{\prime
}}$, these are those with initial slots $f_{0}$ or $\{e_{i^{\prime}%
+1},f_{i^{\prime}+1}e_{i^{\prime}+1}\}$.
\end{proof}

\begin{proof}
[Proof of Thms. \ref{ANNOUNCER_ThmExplicitFormula} and
\ref{ANNOUNCER_ThmExplicitFormula2}]For cyclic homology the same argument
applies, but is stronger: By Lemma \ref{Lemma_TransportToCyclic} every
$\varphi\in HC_{n}(A,I)$ has a representative in $F_{n}C_{n}(A,I)$. As above,
we get a representative in $\tilde{F}_{0}CC_{n}(A,I)=CC_{n}(I)$. As we had
already remarked above, the procedure generalizes to finitely many elements
$\varphi^{\alpha}$ by picking the local units common for them, so we get the
result for $V$ with $V^{\prime}$ the span of the choice of representatives
$\varphi_{j}^{\alpha}$.
\end{proof}

\begin{acknowledgement}
I would like to thank V. Alekseev and M. Wodzicki for their e-mails. Moreover,
I would like to express my gratitude to the Essen Seminar for Algebraic
Geometry and Arithmetic for offering a most friendly and stimulating
scientific atmosphere.
\end{acknowledgement}

\bibliographystyle{amsalpha}
\bibliography{ollinewbib}

\end{document}